\documentclass[a4paper,11pt]{amsart}

\usepackage[all]{xy}
\SelectTips{cm}{}

\usepackage{tikz}
\usepackage{tikz-cd}
\usepackage{mathtools}
\usepackage{amsmath,amssymb}
\usepackage{verbatim}
\usepackage[colorlinks, linkcolor=blue,anchorcolor=Periwinkle,
citecolor=blue,urlcolor=Emerald]{hyperref}
\usepackage[french,english]{babel}
\usetikzlibrary{decorations.pathmorphing}

\newcommand{\ko}{\: , \;}

\numberwithin{equation}{subsection}
\newtheorem{theorem}[subsection]{Theorem}
\newtheorem{definition}[subsection]{Definition}
\newtheorem{classification-theorem}[subsection]{Classification Theorem}
\newtheorem{decomposition-theorem}[subsection]{Decomposition Theorem}
\newtheorem{proposition-definition}[subsection]{Proposition-Definition}
\newtheorem{definition-proposition}[subsection]{Definition-Proposition}
\newtheorem{example-definition}[subsection]{Example-Definition}
\newtheorem{periodicity-conjecture}[subsection]{Periodicity Conjecture}
\newtheorem{lemma}[subsection]{Lemma}
\newtheorem{proposition}[subsection]{Proposition}
\newtheorem{corollary}[subsection]{Corollary}

\newtheorem{example}[subsection]{Example}
\newtheorem{remark}[subsection]{Remark}

\newtheorem{notation}[subsection]{Notation}

\newtheorem{Definition-Proposition}[subsection]{D\'efinition-Proposition}

\newtheorem*{theorema}{Theorem A}
\newtheorem*{theoremb}{Theorem B}

\newcommand{\reminder}[1]{}

\renewcommand{\mod}{\mathrm{mod}}

\newcommand{\Mod}{\mathrm{Mod}\,}

\newcommand{\tr}{\mathrm{tr}}

\newcommand{\Cone}{\mathrm{Cone}}

\newcommand{\Hqe}{\mathrm{Hqe}}

\newcommand{\pretr}{\mathrm{pretr} }

\newcommand{\iso}{\xrightarrow{_\sim}}

\newcommand{\id}{\mathbf{1}}

\newcommand{\Def}{\mathrm{def}\kern 0.1em}

\newcommand{\D}{\mathcal {D}}
\newcommand{\A}{\mathcal {A}}
\newcommand{\B}{\mathcal {B}}
\newcommand{\C}{\mathcal {C}}
\newcommand{\E}{\mathcal {E}}

\newcommand{\N}{\mathcal {N}}

\newcommand{\T}{\mathcal T}
\renewcommand{\P}{\mathcal P}

\newcommand{\Hom}{\mathrm{Hom}}

\newcommand{\Mor}{\mathrm{Mor}}

\renewcommand{\S}{\mathcal{S}}

\newcommand{\dg}{\mathrm{dg}}

\renewcommand{\phi}{\varphi}

\renewcommand{\tilde}[1]{\widetilde{#1}}

\begin{document}

\date{\today}
\title[Enhanced left triangulated categories]
{Enhanced left triangulated categories}
\author{Xiaofa Chen}
\address{University of Science and Technology of China, Hefei, P.~R.~China}
\email{cxf2011@mail.ustc.edu.cn}
\subjclass[2020]{18G35, 18G25, 18E20, 16E30, 16E45}
\keywords{exact dg category, dg derived category, left stable dg category.}
\maketitle
\begin{abstract}
In this short note, we study dg categories with homotopy kernels, whose homotopy categories are known to admit a natural left triangulated structure. 
Prototypical examples of such dg categories arise as dg quotients of exact dg categories.  
We demonstrate that the stablization of the homotopy category of such a dg category admits a canonical dg enhancement via its bounded derived dg category. 
\end{abstract}
\section{Introduction}
DG categories already appear in the work of Kelly~\cite{Kelly65}. They were applied to matrix problems in the representation theory of finite-dimensional algebras. In the 1990s, Bondal--Kapranov observed that the morphism spaces of the  triangulated categories that appear naturally in algebra are the zeroth cohomology groups of certain complexes. The now basic principle (popularized by Grothendieck) in homological algebra is to consider the complex itself rather than its cohomology. Therefore, they proposed to enhance triangulated categories using dg categories, requiring that the triangulated structure should be compatible with the dg structure. Here, the sequences in the dg categories that enhance the distinguished triangles are essentially the graded-split short exact sequences. 

Left triangulated categories are one-sided analogues to triangulated categories.  A left triangulated category consists of an additive category $\T$ with an endofunctor $\Sigma:\T\rightarrow \T$, and a collection of {\em exact left-triangles}, satisfying certain axioms. 
 They enjoy about half of the usual properties of triangulated categories. Since an exact left-triangle will not induce long exact sequences of Hom spaces (as opposed to a distinguished triangle in a triangulated category), we could not expect using graded-split short exact sequences to enhance it.

On the other hand, based on the notion of {\em homotopy short exact sequence}, the author proposed in \cite{Chen26} the notion of {\em stable dg category} to enhance triangulated categories. A homotopy short exact sequence is at the same time homotopy left exact and homotopy right exact. One could therefore expect to enhance left triangulated categories based on the notion of homotopy left exact sequence. This was shown by Mochizuki in \cite{Mochizuki25a}.
A dg category $\A$ is {\em left stable} (= finitely complete) \cite[Definition 2.1]{Mochizuki25a} provided that it admits homotopy kernels, i.e.~for any morphism $j:B\rightarrow C$ in $Z^0(\A)$, it has a homotopy kernel
\[
\begin{tikzcd}
A\ar[r,"f", tail]\ar[rr,bend right=8ex,"h"swap] & B\ar[r,"j",two heads] &C.
\end{tikzcd}
\]
It was shown \cite[Proposition 2.17]{Mochizuki25a} that if $\A$ is a left stable dg category,  then $H^0(\A)$ carries a canonical left triangulated structure.

 The connective cover $\tau_{\leq 0} (\A)$ of the left stable dg category $\A$ remains left stable, and the homotopy category $H^0(\A)$ remains the same. Therefore, we can always assume that the left stable dg category is connective.
 Our aim in this note is to show that the stablization of the left triangulated category $H^0(\A)$ is enhanced by the bounded derived dg category $\D^b_{\dg}(\A)$.
\begin{theorema}[{=Theorem~\ref{thm:universal}}]
Let $\A$ be a left stable dg category. There is a universal exact morphism $F:\A\rightarrow \D^b_{\dg}(\A)$ in $\Hqe$ from $\A$ to a pretriangulated dg category.
Put $\D^b(\A)=H^0(\D^b_{\dg}(\A))$.
If $\A$ is moreover connective, then the functor $H^0(F):H^0(\A)\rightarrow \D^b(\A)$ identifies with the stablization functor $H^0(\A)\rightarrow \S(H^0(\A))$.
\end{theorema}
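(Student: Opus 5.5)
The proof will realize a left stable dg category as a quotient of an exact dg category and then reuse the theory of bounded derived dg categories of exact dg categories from \cite{Chen26}. The key input is the canonical exact structure on $\A$: the conflations are the homotopy left exact sequences $A\rightarrowtail B\twoheadrightarrow C$ whose deflation $j$ is arbitrary and whose inflation $f$ is a homotopy kernel of $j$. Since every morphism of $Z^0(\A)$ has a homotopy kernel, every morphism is a deflation. The inflations, however, need not be homotopy cokernels of anything. So this is not literally an exact dg structure, and I will instead use the \emph{left exact} (one-sided) version.

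Concretely, I define $\D^b_{\dg}(\A)$ as the dg quotient of the pretriangulated hull $\pretr(\A)$ by the full dg subcategory of totalizations $\mathrm{Tot}(A\to B\to C)$ of the homotopy left exact sequences. Equivalently, it is the dg quotient of $\pretr(\A)$ by the thick subcategory generated by the cones of the canonical comparison maps from $A$ to the homotopy fibre of $j$ computed in $\pretr(\A)$. Let $F$ be the composite $\A\to\pretr(\A)\to\D^b_{\dg}(\A)$. It sends homotopy left exact sequences to homotopy short exact sequences, so $F$ is exact.

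For the universal property I take an exact morphism $G:\A\to\B$ in $\Hqe$ with $\B$ pretriangulated. First, $G$ extends uniquely to $\pretr(\A)\to\B$. Exactness of $G$ means that it sends homotopy kernels to homotopy fibres, so the totalizations above are killed. By Drinfeld--Tabuada's universal property of the dg quotient, $G$ then factors uniquely through $\D^b_{\dg}(\A)$ in $\Hqe$, and the factorization is exact.

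For the second statement, assume $\A$ is connective. I will show that $H^0(F)$ satisfies the universal property of the stabilization $H^0(\A)\to\S(H^0(\A))$ among exact functors to triangulated categories. There are three steps.

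\begin{enumerate}
\item $H^0(F)$ is a morphism of left triangulated categories. It commutes with $\Sigma$ (the loop functor $\Omega$, i.e.\ the homotopy kernel of $B\to 0$), up to the shift in $\D^b(\A)$, and it sends exact left-triangles to triangles. This follows from exactness of $F$.
\item Every object of $\D^b(\A)$ is of the form $F(X)[n]$ for some $X\in\A$ and $n\geq 0$. This holds because a cone of $F(j)$ is isomorphic to a shift of the homotopy kernel; by induction on the length of a twisted complex, every object lies in $\bigcup_n F(\A)[n]$, i.e.\ objects are suspensions of objects of $\A$.
\item Morphisms are computed as the colimit
\[
\Hom_{\D^b(\A)}(F X[m],F Y[n])\cong\mathrm{colim}_k\,\Hom_{H^0(\A)}(\Omega^{k+n}X,\Omega^{k+m}Y)\ko
\]
which is exactly the Heller-style description of $\S(H^0(\A))$.
\end{enumerate}

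The main obstacle is the third step. I would attack it by computing morphisms in the dg quotient through Drinfeld's formula, or by a calculus of left fractions. Connectivity enters here: for connective $\A$, the negative cohomology of the Hom complexes of $\A$ is itself recovered as $\Hom(\Omega^k X,Y)$. The homotopy kernel of $X\to 0$ has $\Hom$ complex equal to $\Hom(-,X)[-1]$ truncated, so negative $\Ext$ in $\A$ coincides with maps out of loop objects. I would then show that the quotient inverts exactly the comparison maps $\Omega^k$, which gives the colimit formula.

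Once these three steps are in place, the universal property of $\S$ produces a comparison functor $\S(H^0(\A))\to\D^b(\A)$. It is essentially surjective by the second step and fully faithful by the third, so it is an equivalence.
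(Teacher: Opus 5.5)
Your construction of $\D^b_{\dg}(\A)=\pretr(\A)/\N_{\dg}$ and the proof of its universal property are exactly the paper's: exactness comes from the triangle $\Sigma^{-1}C\to A\to B\to C$ in the quotient, and universality from the universal properties of $\pretr$ and of the Drinfeld quotient \cite{Tabuada10}. Your overall strategy for the connective case is also the paper's: compare through the triangle functor $G:\S(H^0(\A))\to\D^b(\A)$ induced by $H^0(F)$.

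\textbf{The gap is step (3).} It carries the whole content of the second assertion, and you leave it as a plan. Drinfeld's formula or a calculus of fractions in $\tr(\A)/\N$ yields nothing unless you control the objects of $\N$, and you give no such control. $\N$ is generated by the totalizations of \emph{all} homotopy left exact sequences, not only those of the form $\Omega X\to 0\to X$. Knowing that the maps $\Omega^kX\to\Sigma^{-k}X$ become invertible therefore does not tell you what else the quotient kills. The missing idea is that, for connective $\A$, the standard t-structure on $\D(\A)$ restricts to a \emph{bounded} t-structure on $\N$. This rests on three facts:
\begin{enumerate}
\item a 3-term h-complex is homotopy left exact iff its totalization lies in the heart $\mathcal H\simeq\Mod H^0(\A)$, where it equals $\mathrm{coker}\,H^0(j)$ \cite[Lemma 3.6]{Chen24b};
\item by left stability, these totalizations are exactly the finitely presented modules;
\item $\mod H^0(\A)$ is wide, because $H^0(\A)$ has weak kernels.
\end{enumerate}

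This boundedness gives both halves of your colimit formula.
\begin{itemize}
\item \emph{Faithfulness.} $\N$ is Karoubian by \cite{ChenLe07}, hence closed under summands, so $F(A)\cong 0$ forces $A\in\N$. Then $\Hom(A',\Omega^nA)\cong\Hom_{\tr(\A)}(A',\Sigma^{-n}A)=0$ for $n\gg0$, so $\Omega^nA\cong0$ and $A\cong 0$ in $\S(H^0(\A))$. Thus $G$ reflects zero objects, which for a full triangle functor gives faithfulness.
\item \emph{Fullness.} Take a fraction $A\Leftarrow W\to B$ whose cone lies in $\N^{[-n,0]}$. Rotating it down the $\Omega$-tower lowers the amplitude of the cone at each step, until the cone lies in the heart. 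At that point $\Hom_{\tr(\A)}(\Omega^{k}A,\Sigma^{-1}N)=0$ for $N$ in the heart, so the next denominator is a split epimorphism.
\end{itemize}

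\textbf{Step (2) is also not justified as written.} The observation $\Cone(F(j))\cong F(K)[1]$, with $K$ the homotopy kernel of $j$, only handles morphisms coming from $Z^0(\A)$. The induction over twisted complexes needs cones of arbitrary roofs $F(X)[m]\to F(Y)[n]$ in the Verdier quotient, and handling those already requires fullness. The paper gets essential surjectivity for free instead: the essential image of a fully faithful triangle functor is a triangulated subcategory containing $F(\A)$, hence is all of $\D^b(\A)$.

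\textbf{A minor slip.} For connective $\A$ one has $H^{-k}\Hom_\A(A',X)\cong H^0\Hom_\A(A',\Omega^kX)$. So negative cohomology is given by maps \emph{into} loop objects, not out of them.
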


Let $\Lambda$ be an artin algebra. It is classical that the ideal quotient $\underline{\mod}\mbox{-}\Lambda$ of the module category $\mod\mbox{-}\Lambda$ modulo the projectives is left triangulated. We provide an enhanced version of this result.
Let $(\B,\mathcal S)$ be a connective exact dg cateogry with enough projectives. We denote by $\P$ the full dg subcategory of $\B$ consisting of the projectives.
\begin{theoremb}[{=Theorem~\ref{thm:dgquotient}}]
The dg quotient $\B/\P$ is left stable.
\end{theoremb}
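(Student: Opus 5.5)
To prove that $\B/\P$ is left stable, I will show that every morphism $j:B\to C$ in $Z^0(\B/\P)$ admits a homotopy kernel. The idea is to replace $j$, up to isomorphism in $H^0(\B/\P)$, by a deflation of $\B$. Then I take its kernel inside the exact structure $\mathcal S$ and check that the resulting conflation becomes a homotopy left exact sequence after passing to the dg quotient.

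First, the reduction to deflations. Since $\B$ is connective, the dg quotient $\B/\P$ is connective. By Drinfeld's construction, a morphism in $H^0(\B/\P)$ is represented by a roof, or equivalently by a morphism of $\B$ after adding a projective summand. Concretely, given $j:B\to C$, I choose a deflation $p:P\to C$ with $P\in\P$, which exists because there are enough projectives. I lift $j$ to a morphism $j':B\to C$ in $Z^0(\B)$ up to homotopy in $\B/\P$; this is justified because $H^0(\B)\to H^0(\B/\P)$ is full for connective $\B$ with $\P$ consisting of projectives, as the ideal quotient $H^0(\B)/[\P]$ agrees with $H^0(\B/\P)$. Then $[j',p]:B\oplus P\to C$ is a deflation in $(\B,\mathcal S)$. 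Since $B\oplus P\cong B$ in $H^0(\B/\P)$, it suffices to construct a homotopy kernel of $[j',p]$.

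Next, I take a conflation $A\xrightarrow{f}B\oplus P\xrightarrow{[j',p]}C$ in $\mathcal S$, with its homotopy $h$. The key claim is that its image in $\B/\P$ is a homotopy left exact sequence. For this I use the characterization of homotopy kernels through representables: for each $X$, the sequence $\B/\P(X,A)\to\B/\P(X,B\oplus P)\to\B/\P(X,C)$ must be a homotopy fibre sequence. That is, the induced map from $\B/\P(X,A)$ to the cocone of $\B/\P(X,B\oplus P)\to\B/\P(X,C)$ must be a quasi-isomorphism. I intend to verify this in the derived category $\D(\B)$, where $\B/\P$ is modelled as the Verdier quotient $\D(\B)/\langle\P\rangle$. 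The conflation yields a triangle $A\to B\oplus P\to C\to\Sigma A$ in $\D^b(\B)$. Morphisms in $\B/\P$ from $X$ are computed by maps from $X$ to objects modulo maps factoring through $\langle\P\rangle$. Because projectives are projective relative to $\mathcal S$, the long exact sequence of $\Hom(X,-)$ is exact in nonpositive degrees. The cone computation therefore reduces to checking that the map $\tau_{\le 0}\B/\P(X,A)\to\tau_{\le0}\mathrm{cocone}$ is a quasi-isomorphism; in the connective setting this is all that is needed.

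I expect this last comparison to be the main obstacle. Degree $0$ and the strictly negative degrees behave differently. In degree $0$, the issue is exactly the classical fact that $\underline{\mod}\mbox{-}\Lambda$ has kernels of maps from epimorphisms, which uses the projective cover. In degree $-n$, one needs $\P$-stable homotopy groups to fit into a long exact sequence, which requires controlling $\Hom_{\D(\B)}(X,P[-n])$ contributions. My first attempt will be to use the description of the dg quotient via a projective resolution of $X$: in $\B/\P$, $X$ becomes its syzygy shifted, so the negative morphisms in $\B/\P(X,-)$ are computed as $\mathrm{Ext}$-type groups out of the syzygies $\Omega^nX$, which are exact in the second variable along conflations with projective middle terms. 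If that description is too delicate, I will fall back on establishing only the universal property of the homotopy kernel, namely that the relevant square is homotopy cartesian, by an explicit lifting argument using projectivity of $P$ together with the homotopy $h$.
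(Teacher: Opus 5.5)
Your reduction of an arbitrary $j$ to the deflation $[j',p]\colon B\oplus P\to C$ is essentially the paper's second step. The paper justifies that $[j',p]$ is a deflation by pulling back $E\rightarrowtail P\twoheadrightarrow C$ along $j'$. It also notes that $j$ and $[j',p]$ are isomorphic as objects of $H^0(\Mor(\B/\P))$, which is more than $B\oplus P\cong B$ in $H^0(\B/\P)$; both points are routine.

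\textbf{The gap.} You do not prove the key claim that the image of a conflation in $\B/\P$ is homotopy left exact. You flag it as the main obstacle, and the tools you propose do not apply.
\begin{itemize}
\item The triangle $A\to B\oplus P\to C\to\Sigma A$ exists in $\D^b(\B)$. But the morphism complexes of $\B/\P$ are computed in $\tr(\B)/\tr(\P)$. There the conflation is not a triangle: its totalization $N$ is nonzero. Computing in $\D^b(\B)/\tr(\P)$ instead gives the wrong morphism spaces. For instance, it would make the comparison $A\to V$ an isomorphism in degree $1$ as well, which fails in $\B/\P$.
\item The statement ``in $\B/\P$, $X$ becomes its syzygy shifted'' is false. $X\cong\Sigma\Omega X$ holds in $\D^b(\B)/\tr(\P)$. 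In $\tr(\B)/\tr(\P)$, however, the cone of $\Sigma\Omega X\to X$ is the nonzero totalization of $\Omega X\rightarrowtail P_X\twoheadrightarrow X$. In fact $H^{-n}\Hom_{\B/\P}(X,Y)\cong\Hom_{H^0(\B)/[\P]}(X,\Omega^nY)$, with syzygies in the \emph{second} argument.
\item Your conflation does not have a projective middle term, so the ``exact along conflations with projective middle terms'' idea would not apply anyway.
\item Projectivity does not make $\Hom(X,-)$ exact for arbitrary $X$. Its actual role is the one below.
\end{itemize}

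\textbf{The missing idea} is an orthogonality argument in $\tr(\B)$. Let $N$ be the totalization of $A\xrightarrow{f}B\oplus P\xrightarrow{[j',p]}C$, and put $V=\Sigma^{-1}\Cone([j',p])$. Diagram ($\bigstar$) gives a triangle $\Sigma^{-2}N\to A\to V\to\Sigma^{-1}N$ in $\tr(\B)$.
\begin{itemize}
\item For $Q\in\P$, $\Hom_\B(Q,-)$ carries the conflation to a homotopy short exact sequence. Hence $\Hom_{\tr(\B)}(Q,\Sigma^iN)=0$ for all $i$, i.e.\ $N\in\tr(\P)^{\perp}$.
\item Therefore $\Hom_{\tr(\B)}(X,\Sigma^iN)\iso\Hom_{\tr(\B)/\tr(\P)}(X,\Sigma^iN)$ for every $X\in\B$ and every $i$.
\item The left side vanishes for $i\le -1$, because the conflation is already homotopy left exact in $\B$.
\item The long exact sequence of the triangle then gives $\Hom_{\tr(\B)/\tr(\P)}(X,\Sigma^iA)\iso\Hom_{\tr(\B)/\tr(\P)}(X,\Sigma^iV)$ for all $i\le 0$.
\end{itemize}
This is exactly homotopy left exactness in $\B/\P$. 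No syzygy description of $\B/\P$ and no explicit lifting argument is needed.
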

In summary, for a connective exact dg category $\B$ with enough projectives, we have the following diagram
\[
\begin{tikzcd}
\B\ar[r]\ar[d,rightsquigarrow]&\B/\P\ar[r]\ar[d,rightsquigarrow]&\D^b_{\dg}(\B/\P)\simeq \D^b_{\dg}(\B)/\pretr(\P)\ar[d,rightsquigarrow]\\
H^0(\B)\ar[r]&H^0(\B)/[\P]\ar[r]&\S(H^0(\B)/[\P])\simeq \D^b(\B)/\tr(\P).
\end{tikzcd}
\]

\section{Left stable dg categories}
A dg category $\A$ is {\em connective} provided that $H^{>0}\Hom_{\A}(A,B)=0$ for any pair of objects $A$, $B$ in $\A$.
Throughout this section, let $\A$ be an additive dg category, i.e.~$H^0(\A)$ is an additive category. 

\begin{definition}[{\cite[Definition 2.1]{Mochizuki25a}}]
\label{def:leftstable}
\rm{
A dg category $\A$ is {\em left stable} (=finitely complete) provided that it admits homotopy kernels, i.e.~for any morphism $j:B\rightarrow C$ in $Z^0(\A)$, it has a homotopy kernel
\[
\begin{tikzcd}
A\ar[r,"f", tail]\ar[rr,bend right=8ex,"h"swap] & B\ar[r,"j"] &C.
\end{tikzcd}
\]
}
\end{definition}
It is routine to verify the following.
\begin{proposition}[{\cite[Proposition 2.17]{Mochizuki25a}}]
\label{thm:main}
Let $\A$ be a left stable dg category. Then $H^0(\A)$ carries a canonical left triangulated structure.
\end{proposition}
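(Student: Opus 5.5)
The approach is to build the left triangulated structure directly from homotopy kernels in $\A$, and to check the axioms by moving to the dg derived category $\D(\A)$ of right dg $\A$-modules, which is a genuine triangulated category.

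\textbf{Construction.} Via the Yoneda embedding, a homotopy kernel $A\xrightarrow{f}B\xrightarrow{j}C$ with null-homotopy $h$ is exactly the statement that $A^\wedge$ represents the cocone $\Sigma^{-1}\Cone(j^\wedge)$ in $\D(\A)$. The homotopy $h$ yields a canonical morphism $A^\wedge\to\Sigma^{-1}\Cone(j^\wedge)$, and the condition is that this morphism is a quasi-isomorphism.

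\begin{itemize}
\item The loop functor $\Omega$ on $H^0(\A)$ is the homotopy kernel of $X\to 0$. Equivalently, $\Omega X$ represents $\Sigma^{-1}X^\wedge$.
\item The exact left-triangles are the sequences isomorphic in $H^0(\A)$ to
\[
\Omega C\xrightarrow{\ \delta\ } A\xrightarrow{\ f\ } B\xrightarrow{\ j\ } C.
\]
\item The connecting map $\delta$ comes from applying $\Sigma^{-1}$ to the triangle $\Sigma^{-1}\Cone(j^\wedge)\to B^\wedge\to C^\wedge\to\Cone(j^\wedge)$ and rotating.
\end{itemize}

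Functoriality of $\Omega$ and its well-definedness up to unique isomorphism follow from the uniqueness of homotopy kernels up to homotopy equivalence. This uniqueness holds because the Yoneda functor $H^0(\A)\to\D(\A)$ is fully faithful, and cocones are unique up to non-unique isomorphism. The relevant morphisms are then pinned down by full faithfulness.

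\textbf{Axioms.} Let $\mathcal U\subseteq\D(\A)$ be the full subcategory of objects isomorphic to representables. It is closed under $\Sigma^{-1}$ and under cocones of morphisms, which is exactly left stability. Our left-triangles are precisely the images of the distinguished triangles of $\D(\A)$ whose three vertices lie in $\mathcal U$. Each axiom of a left triangulated category then follows from the corresponding triangulated axiom in $\D(\A)$, together with closure of $\mathcal U$ under cocones.

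\begin{itemize}
\item Identity triangle $0\to X\to X\to$: this is the homotopy kernel of $1_X$, which is $0$.
\item Existence of left-triangles on any morphism: this is the definition of left stability.
\item Closure under isomorphism: by construction.
\item Rotation to the left: this uses $\Omega\cong\Sigma^{-1}$ on $\mathcal U$ and the ordinary rotation axiom. The sign conventions must be checked here.
\item Morphism axiom and octahedral axiom: the octahedral axiom in $\D(\A)$ produces a new vertex which is a cocone of two objects of $\mathcal U$, so it lies in $\mathcal U$.
\end{itemize}

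\textbf{Main obstacle.} The hard step is the identification $\Omega\simeq\Sigma^{-1}$ compatibly with the connecting morphisms, that is, the sign and coherence bookkeeping in the rotation axiom. This matters because $\Sigma$ itself is \emph{not} an endofunctor of $\mathcal U$. Only $\Sigma^{-1}$ is, so the rotation argument may only rotate toward $\Sigma^{-1}$.
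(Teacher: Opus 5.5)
There is a genuine gap, and it is in your first step: you are using the wrong notion of homotopy kernel. Homotopy left exactness of $A\xrightarrow{f}B\xrightarrow{j}C$ (with $h$) is a condition in non-positive degrees only. The canonical map $A^\wedge\to V=\Sigma^{-1}\Cone(j^\wedge)$ must induce isomorphisms $\Hom(A',\Sigma^iA)\to\Hom(A',\Sigma^iV)$ for $i\le 0$; this is exactly what is checked in the proof of Theorem~\ref{thm:dgquotient}. In other words, the condition is $\tau_{\le 0}A^\wedge\simeq\tau_{\le 0}V$, not $A^\wedge\simeq V$.

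The paper relies on the truncated notion throughout:
\begin{itemize}
\item an additive category with kernels, placed in degree $0$, is left stable with $\Omega=0$;
\item the connective cover $\tau_{\le 0}\A$ stays left stable;
\item in the aisle example, $\Omega(Z)\simeq\tau_{\le 0}\Sigma^{-1}Z$;
\item for connective $\A$, the totalization $N\simeq\Sigma\Cone(A^\wedge\to V)$ of a homotopy left exact sequence is the cokernel of $H^0(j)$, which is usually nonzero.
\end{itemize}
Your definition would force $N\simeq 0$ and hence $\D^b(\A)=\tr(\A)$. With your definition, an additive category with kernels is not even left stable: the homotopy kernel of $0\to C$ would have to represent $\Sigma^{-1}C^\wedge$, whose $H^1$ is nonzero for $C\neq 0$. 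Also, $\Omega X$ is the homotopy kernel of $0\to X$, not of $X\to 0$; the latter is $X$ itself.

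Your argument does show that a cocone-closed full subcategory of a triangulated category is left triangulated, but that is not the statement here. Under the correct definition, the following claims are all false:
\begin{itemize}
\item $\mathcal U$ is closed under $\Sigma^{-1}$ and under cocones;
\item $\Omega\cong\Sigma^{-1}|_{\mathcal U}$;
\item left triangles are distinguished triangles of $\D(\A)$ with vertices in $\mathcal U$.
\end{itemize}
So the axioms cannot be inherited from $\D(\A)$ as you propose. No variant of this strategy can work. If the canonical left triangulated structure on $H^0(\A)$ were induced from a cocone-closed subcategory of a triangulated category, then $\Omega$ would be fully faithful, but $\Omega$ can be zero.

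What you need instead is to pass to the connective cover. Then $\mathcal U\subseteq\D(\A)^{\le 0}$, homotopy kernels are truncated cocones $\tau_{\le 0}\Sigma^{-1}\Cone(j^\wedge)$, $\Omega X\simeq\tau_{\le 0}\Sigma^{-1}X^\wedge$, and connecting maps are truncations of those in $\D(\A)$. You must then show that truncating the relevant triangles and octahedra of $\D(\A)$ again produces homotopy kernels. $\tau_{\le 0}$ does not preserve triangles; the point is that, as the right adjoint of the inclusion of the aisle at the enhanced level, it preserves homotopy pullbacks. Alternatively, verify the axioms directly in $\A$ by manipulating homotopy kernels; the paper calls this routine and refers to \cite{Mochizuki25a}.

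So the real obstacle is not sign bookkeeping for $\Omega\simeq\Sigma^{-1}$. It is that $\Omega$ is only a truncation of $\Sigma^{-1}$.
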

A morphism $F:\A\rightarrow \B$ in $\Hqe$ from $\A$ to an exact dg  category $(\B,\mathcal S)$ is {\em exact}, provided that $F$ sends homotopy left exact sequences in $\A$ to homotopy short exact sequences in $\mathcal S$.

\begin{theorem}\label{thm:universal}
Let $\A$ be a left stable dg category. There is a universal exact morphism $F:\A\rightarrow \D^b_{\dg}(\A)$ in $\Hqe$ from $\A$ to a pretriangulated dg category.
If $\A$ is moreover connective, then the functor $H^0(F):H^0(\A)\rightarrow \D^b(\A)$ identifies with the stablization functor $H^0(\A)\rightarrow \S(H^0(\A))$.
\end{theorem}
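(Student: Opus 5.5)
The plan is to reduce both assertions to the theory of exact dg categories and their bounded dg derived categories developed in \cite{Chen26}. For the first assertion, we equip $\A$ with the class $\mathcal S$ of all homotopy left exact sequences, that is, the homotopy kernel sequences $A\xrightarrow{f}B\xrightarrow{j}C$ together with their homotopy $h$. Here $j$ is \emph{not} required to be a homotopy cokernel of $f$. So we cannot directly say that $(\A,\mathcal S)$ is an exact dg category in the sense of \cite{Chen26}, and we proceed in two steps.

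\emph{Step 1.} Embed $\A$ into the pretriangulated hull $\pretr(\A)$. We then form the Verdier (Drinfeld) dg quotient of $\pretr(\A)$ by the thick subcategory $\N$ generated by the cones of the comparison morphisms $\Cone(f)\to C$, or equivalently by the totalizations of the sequences in $\mathcal S$. We define
\[
\D^b_{\dg}(\A):=\pretr(\A)/\N ,
\]
and let $F$ be the composition $\A\to\pretr(\A)\to\D^b_{\dg}(\A)$. By construction $F$ sends each homotopy left exact sequence to a triangle, i.e.\ to a homotopy short exact sequence of the pretriangulated target. So $F$ is exact.

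\emph{Step 2.} Universality follows by combining two universal properties: that of $\pretr$ among morphisms to pretriangulated dg categories, and Drinfeld's universal property of the dg quotient in $\Hqe$ \cite{Chen26}. An exact $G:\A\to\C$ with $\C$ pretriangulated extends uniquely to $\pretr(\A)\to\C$. This extension kills $\N$ exactly because $G$ takes homotopy left exact sequences to triangles. Therefore it factors uniquely through $\D^b_{\dg}(\A)$. To make this rigorous one should check that the functor category $\mathrm{rep}(\pretr(\A),\C)$ restricted to functors annihilating $\N$ identifies with the exact ones. This is a formal argument.

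\emph{Step 3, the connective case.} Here I would compare $H^0(F)$ with the Heller--Beligiannis--Marmaridis stabilization $\S(H^0(\A))$, whose objects are pairs $(X,n)$ and whose morphisms are the colimits $\mathrm{colim}_k\,\Hom(\Sigma^{k+m}X,\Sigma^{k+n}Y)$, where $\Sigma$ is the loop functor given by the homotopy kernel of $0\to C$. First, $H^0(F)$ sends $\Sigma$ to $[-1]$, so by the universal property of stabilization it induces a triangle functor $\Phi:\S(H^0(\A))\to\D^b(\A)$. The main obstacle is proving that $\Phi$ is fully faithful, which amounts to computing the morphisms in the Verdier quotient $\D^b(\A)$ as such a colimit. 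My approach is as follows. Connectivity should give a $t$-structure-like truncation: every object of $\pretr(\A)$ is an iterated cone of shifts of objects of $\A$, and modulo $\N$ each cone $\Cone(f)$ can be replaced by the homotopy kernel or by a loop object. This shows that every object of $\D^b(\A)$ is isomorphic to $F(X)[n]$, which gives essential surjectivity. For morphisms, I would use a calculus of left fractions: a roof $F(X)\leftarrow F(X')\to F(Y)$ with cone in $\N$ can, after enough shifts, be straightened using homotopy pullbacks, which exist in $\A$. This yields $\Hom_{\D^b}(FX,FY[n])\cong\mathrm{colim}_k\Hom_{H^0\A}(\Sigma^{k}X,\Sigma^{k-n}Y)$. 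Connectivity enters by guaranteeing that negative-degree morphisms of $\A$ are controlled by $\Sigma$, namely $H^{-k}\Hom(X,Y)\cong\Hom(X,\Sigma^kY)$, so no spurious extra morphisms appear. I expect this fraction computation to be the hardest part.
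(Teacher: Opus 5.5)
Your Steps 1 and 2 are essentially the paper's Lemma~\ref{lem:univer}. The universal property of the Drinfeld quotient is due to Tabuada \cite{Tabuada10}, and quotienting by the thick rather than the triangulated closure of $\N$ changes nothing.

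\textbf{The gap: no bounded t-structure on $\N$.} The genuine gap is in Step 3, where the idea that makes the connective case work is missing. Connectivity gives the standard t-structure on $\D(\A)$ with heart $\Mod H^0(\A)$. The key fact is that a 3-term h-complex is homotopy left exact if and only if its totalization lies in this heart, where it equals the finitely presented module $\mathrm{Cok}(H^0(B)\to H^0(C))$. Since $H^0(\A)$ has weak kernels, $\mod H^0(\A)$ is wide, so the t-structure restricts to a \emph{bounded} t-structure on $\N$ (Corollary~\ref{cor:N}).

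Without this, ``after enough shifts'' has no content. In a roof $A\xleftarrow{s}W\to B$ the cone $M=\Cone(s)$ is an arbitrary object of $\N$. The isomorphism $H^{-k}\Hom(X,Y)\cong\Hom(X,\Omega^kY)$ you invoke (your $\Sigma$ is the loop functor $\Omega$) controls morphisms of $\tr(\A)$, not these cones.

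The paper proceeds as follows:
\begin{enumerate}
\item It truncates $M$ into $\N^{[-n,0]}$, using $\Hom(A,\N^{\geq 1})=0$.
\item It pulls $\Sigma^{-1}s$ back along $\Omega A\to\Sigma^{-1}A$. This map is an isomorphism in $\D^b(\A)$, since its cone $\Sigma^{-1}H^0(A)$ lies in $\N$.
\item It re-truncates, so the new cone lands in $\N^{[-n+1,0]}$.
\item After $n+1$ steps the cone lies in the heart, and $\Hom(\Omega^{n+1}A,\Sigma^{-1}N)=0$ makes the denominator split.
\end{enumerate}
Note also that $W$ is an object of $\tr(\A)$, not some $F(X')$. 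So homotopy pullbacks in $\A$ are not the relevant tool.

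\textbf{Faithfulness and essential surjectivity.} You never address faithfulness. Straightening roofs gives at best surjectivity of $\mathrm{colim}_k\Hom(\Omega^kX,\Omega^{k-n}Y)\to\Hom_{\D^b(\A)}(FX,FY[n])$. Injectivity again needs boundedness. If $f\colon X\to Y$ vanishes in $\D^b(\A)$, it factors through some $M\in\N$. Then $\Hom_{\tr(\A)}(\Omega^kX,\Sigma^{-k}M)=H^{-k}(M)(\Omega^kX)=0$ for $k\gg0$, whence $\Omega^kf=0$.

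The paper instead shows that $G$ reflects zero objects. It uses that $\N$ is Karoubian by \cite{ChenLe07}, because $\N$ has a bounded t-structure, and that $A\in\N$ forces $\Omega^nA=0$. A full triangle functor that reflects zero objects is faithful.

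Your essential-surjectivity argument only handles cones of morphisms of $\A$. After one step an object is isomorphic to $F(Z)[n]$ only in $\D^b(\A)$. To identify the next cone you must therefore handle a morphism of $\D^b(\A)$ out of $F(Z)[n]$, which requires fullness. The clean route is the paper's: once $G$ is full, its essential image is a triangulated subcategory containing $F(\A)$, hence everything.
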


The rest of this subsection is devoted to proving Theorem~\ref{thm:universal}.
Let $\mathcal N$ be the full triangulated subcategory of $\tr(\A)$ generated by the total dg modules $N$ of homotopy left exact sequences 
\[
\begin{tikzcd}
A\ar[r,"f"]\ar[rr,"h"swap,bend right=8ex]&B\ar[r,"j"]&C
\end{tikzcd}. 
\]
So $N$ is defined by the following diagram in $\C_{\dg}(\A)$
\begin{equation}\label{TR4}\tag{$\bigstar$}
\begin{tikzcd}[every label/.append style={font=\tiny}]
A\ar[r,"f"]\ar[d,"\begin{bmatrix}-f\\-h\end{bmatrix}",swap]&B\ar[r,"\begin{bmatrix}0\\1\end{bmatrix}"]\ar[d,equal]&U\ar[rd,"s"red,red]\ar[r,"{[}1{,}\;0{]}"]\ar[d,"{[}-h{,}j{]}",swap]&\Sigma A\ar[d]\\ 
V\ar[r,"{[}-1\ 0{]}"swap]&B\ar[r,"j"swap]&C\ar[r,"\begin{bmatrix}0\\1\end{bmatrix}"swap]\ar[d,"\begin{bmatrix}0\\0\\1\end{bmatrix}",swap]&\Sigma V\ar[d]\\ 
& &N\ar[r,equal]\ar[d]&N\ar[d]\\
&&\Sigma U\ar[r]&\Sigma^{2}A
\end{tikzcd}
\end{equation}
where we omit the symbol $\wedge$ for representable dg modules and where 
\[
s=\begin{bmatrix}0&0\\-1&0\end{bmatrix} \ko U=\Cone(f)\, \ko V=\Sigma^{-1}\Cone(j) \mbox{ and }
N=\Cone([-h,\;j]).
\]

Let $\mathcal N_{\dg}$ be the full dg subcategory of $\pretr(\A)$ consisting of the objects in $\N$. 
Let $F$ be the canonical morphism from $\A$ to the Drinfeld dg quotient $\pretr(\A)/\mathcal N_{\dg}$
(recall that we assume the morphism complexes of $\A$ to be cofibrant hence flat). We consider
$F$ as a morphism in the category $\Hqe$ obtained from the category of small dg categories
by localizing at the class of quasi-equivalences.

\begin{lemma}\label{lem:univer} 
The morphism
$F:\A\rightarrow \pretr(\A)/\mathcal N_{\dg}$ is the universal exact morphism from $\A$ to a pretriangulated dg category.
\end{lemma}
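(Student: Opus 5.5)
We will reduce the statement to the universal property of pretriangulated hulls combined with the universal property of Drinfeld dg quotients in $\Hqe$.

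\emph{Step 1: exactness of $F$.} Let $\B$ be pretriangulated, with its canonical exact structure whose conflations are the homotopy short exact sequences, i.e.\ those whose images in $H^0(\B)$ extend to distinguished triangles. A morphism $G:\A\rightarrow\B$ in $\Hqe$ extends uniquely, up to isomorphism in $\Hqe$, to a morphism $\tilde G:\pretr(\A)\rightarrow\B$. A homotopy left exact sequence $(f,j,h)$ is sent by $G$ to a homotopy short exact sequence exactly when the natural morphism $\Cone(f)\rightarrow C$, induced by $[-h,j]$, becomes an isomorphism in $H^0(\B)$. Equivalently, $\tilde G$ must annihilate the total module $N=\Cone([-h,\;j])$ of diagram \eqref{TR4}. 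Here we use that, in \eqref{TR4}, $U=\Cone(f)$ and $N$ is the cone of the comparison map $U\rightarrow C$. The red homotopy $s$ is only needed to check that this comparison map really is the map to the homotopy cokernel determined by the triple $(f,j,h)$. Applied to $\B=\pretr(\A)/\N_{\dg}$ and $G=F$, every such $N$ becomes zero in the quotient. Hence $F$ sends homotopy left exact sequences to homotopy short exact sequences, so $F$ is exact.

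\emph{Step 2: universality.} Let $G:\A\rightarrow\B$ be exact with $\B$ pretriangulated. By Step 1, $H^0(\tilde G)$ kills all the generators $N$. Since $H^0(\tilde G)$ is a triangle functor, it kills the triangulated subcategory $\N$ they generate, i.e.\ the objects of $\N_{\dg}$. Drinfeld's universal property, in Tabuada's form in $\Hqe$, then gives a unique factorization of $\tilde G$ through $\pretr(\A)/\N_{\dg}$. Composing with the bijection between morphisms $\A\rightarrow\B$ and morphisms $\pretr(\A)\rightarrow\B$, we conclude that exact morphisms $\A\rightarrow\B$ correspond bijectively to morphisms $\pretr(\A)/\N_{\dg}\rightarrow\B$ in $\Hqe$. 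We also need the quotient to be pretriangulated. This holds because the Drinfeld quotient of a pretriangulated dg category by a full pretriangulated subcategory is again pretriangulated, its $H^0$ being the Verdier quotient $\tr(\A)/\N$.

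\emph{Main obstacle.} The delicate point is the identification in Step 1: $G$ carries the homotopy left exact sequence to a conflation if and only if $\tilde G(N)\cong 0$. The target notion of homotopy short exact sequence refers to homotopy cartesian or cocartesian squares with a chosen homotopy $h$. We must match it with the cone of the specific morphism $[-h,j]:\Cone(f)\rightarrow C$ in $\pretr(\A)$, and \eqref{TR4} is the diagram that performs this match. Our plan is to compute in $\C_{\dg}(\A)$ directly. We would show that the homotopy cokernel of $\tilde G(f)$ is $\tilde G(U)$, and that the sequence is homotopy right exact precisely when the induced map $\tilde G(U)\rightarrow\tilde G(C)$ is a homotopy equivalence. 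Homotopy left exactness is automatic: it is preserved by the quasi-functor $\tilde G$, and in a pretriangulated category it is equivalent to right exactness, which again reduces to the vanishing of the cone. A secondary technical point is flatness of the morphism complexes, which is needed so that the Drinfeld quotient computes the localization in $\Hqe$; this is guaranteed by the standing cofibrancy assumption.
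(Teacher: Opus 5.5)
Your proof is correct and is essentially the paper's. Universality is argued the same way: extend along $\A\to\pretr(\A)$, note that exactness forces the generators $N$ (hence all of $\N$) to vanish, and invoke Tabuada's universal property of the Drinfeld quotient. For exactness you invert the comparison map $\Cone(f)\to C$, whose cone is $N$, where the paper uses a roof to invert $[-f,-h]^{\intercal}\colon A\to V$, whose cone is $\Sigma^{-1}N$; these are two faces of the same diagram $(\bigstar)$.

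Two side remarks in your write-up are wrong, though neither is load-bearing. First, homotopy left exactness is not preserved by $\tilde G$ in general: it already fails for $\A\to\pretr(\A)$ whenever $N\not\cong 0$, and if it held, every morphism to a pretriangulated category would be exact. Second, being homotopy short exact in $\B$ depends on the homotopy $h$, not only on the image in $H^0(\B)$. Your cone criterion is the correct formulation and suffices on its own.
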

\begin{proof}
Put $\D^b(\A)=H^0(\pretr(A)/\mathcal N_{\dg})$.
We first show that $F$ is exact.
Let $X$ be a homotopy left exact sequence as follows
\[
\begin{tikzcd}
A\ar[r,"f"]\ar[rr,"h"swap,bend right=8ex]&B\ar[r,"j"]&C
\end{tikzcd}.
\]
We consider the following roof $[-f,-h]^{\intercal} \backslash [0,-1]^{\intercal}$: 
\[
\begin{tikzcd}
&V&\\
\Sigma^{-1}C\ar[ru,rightarrow,"{[}0{,}-1{]^{\intercal}}"]&&A\ar[lu,Rightarrow,"{[}-f{,}-h{]^{\intercal}}"{swap}]
\end{tikzcd}
\]
where $V$ is $\Sigma^{-1}$ of the mapping cone of $j:B\rightarrow C$.
We have the following triangle in $\D^b(\A)$:
\[
\begin{tikzcd}
\Sigma^{-1}C\ar[rr,"{[}-f{,}-h{]}^{\intercal} \backslash {[}0{,}-1{]}^{\intercal}"]&&A\ar[r,"f/1"]&B\ar[r,"j/1"]&C.
\end{tikzcd}
\]
So the morphism $F$ is exact.

We show that $F$ is universal.
Let $G:\A\rightarrow \B$ be an exact morphism in $\mathrm{Hqe}$ from $\A$ to a pretriangulated dg category $\B$.
By the universal property of $\A\rightarrow \pretr(\A)$, there exists a unique morphism from $\pretr(\A)$ to $\B$ in $\mathrm{Hqe}$ which extends $G$.
Since $G$ is an exact morphism, $H^0(G)$ sends the totalization $N$ of $X$ to a zero object in $H^0(\B)$.
By the universal property of Drinfeld dg quotient \cite{Tabuada10}, there exists a unique morphism in 
$\Hqe$ from $\pretr(\A)/\mathcal N_{\dg}$ to $\B$.
\end{proof}

In the rest, we assume that the dg category $\A$ is connective. In this case, we have 

\begin{itemize}
\item The left aisle $\D(\A)^{\leq 0}$ is formed by those dg modules whose cohomology is concentrated in non-positive degrees.
\item The heart $\mathcal H$ of the t-structure $(\D(\A)^{\leq 0},\D(\A)^{\geq 0})$ is equivalent to $\Mod H^0(\A)$ and the cohomological functor $H^0$ sends a dg module $M$ to $H^0(M):A\mapsto H^0(M(A))$.
\item In particular $H^0$ is fully faithful on the full subcategory of quasi-representable dg modules and $H^0(M)$ is projective in the heart for each quasi-representable dg module $M$.
\end{itemize}

\begin{lemma}[{\cite[Lemma 3.6]{Chen24b}}]
A 3-term h-complex
\[
\begin{tikzcd}
A\ar[r,"f"]\ar[rr,bend right=8ex,"h"swap]&B\ar[r,"j"]&C
\end{tikzcd}
\]
is left exact if and only if its totalization $N$ lies in $\mathcal H$. In this case, the totalization $N$ is the cokernel of $H^0(B)\xrightarrow{H^0(j)}H^0(C)$ in $\mathcal H$.
\end{lemma}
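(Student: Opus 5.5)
The plan is to work entirely inside the derived category $\D(\A)$ of the connective dg category $\A$, using the standard t-structure recalled above. Its heart $\mathcal H$ is $\Mod H^0(\A)$, and $H^0$ is the degree-zero cohomology functor.

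First I would compute the totalization explicitly. By construction, $N=\Cone([-h,\;j]:U\rightarrow C)$ with $U=\Cone(f)$, and $N$ sits in the triangle
\[
U\longrightarrow C\longrightarrow N\longrightarrow \Sigma U .
\]
Dually, the diagram \eqref{TR4} also gives a triangle $\Sigma V\rightarrow N\rightarrow \Sigma^2 A\rightarrow$. I would then relate homotopy left exactness to the derived picture. By definition, the h-complex is homotopy left exact exactly when the canonical morphism $[-f,-h]^{\intercal}:A\rightarrow V=\Sigma^{-1}\Cone(j)$ induces a quasi-isomorphism of $\A$-modules restricted to the representables. Since $A$ is representable, this is the statement that the morphism $A^\wedge\rightarrow V$ is an isomorphism in $\D(\A)$. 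The octahedral structure in \eqref{TR4} identifies the cone of $A\rightarrow V$ with $\Sigma^{-1}N$, up to sign and shift: the cone of $A\to V$ is, by the $3\times 3$ diagram, $\Sigma^{-1}$ of the cone of $U\rightarrow C$, that is, $\Sigma^{-1}N$. So left exactness is equivalent to $N\cong 0$ being false in general. What it actually yields is a weaker condition, which I would state precisely as follows: homotopy left exactness means $\tau_{\le 0}$-truncated exactness. That is, since the homotopy kernel is taken in the connective setting, the correct condition is that $A\rightarrow \tau_{\leq 0}V$ is an isomorphism, i.e.\ $\Hom(P,A)\to\Hom(P,V)$ is a quasi-isomorphism only after truncation. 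I would therefore write $H^i(V)=H^i(A)$ for $i\le 0$ as the criterion, together with the long exact sequence of the triangle $A\to V\to \Sigma^{-1}N\cdot\Sigma$.

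Next I would compute the cohomology of $N$ via these long exact sequences. The modules $A,B,C$ are connective, so $H^i$ of them vanishes for $i>0$. From the triangle $V\to B\to C\to\Sigma V$ we see that $V$ is concentrated in degrees $\le 1$. From $A\to V\to \mathrm{cone}$, we get the following: $\mathrm{cone}(A\to V)$ has vanishing $H^{i}$ for $i\le 0$ if and only if $H^i(A)\cong H^i(V)$ for $i\le 0$ and $H^1(A)\to H^1(V)$ is injective (which is automatic). This is exactly the left exactness condition. In that case the cone has cohomology only $H^1(V)=\operatorname{coker}(H^0(B)\to H^0(C))$ in degree $1$. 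Hence $N$, a shift of this cone by one, lies in $\mathcal H$ and equals that cokernel. Conversely, if $N\in\mathcal H$, the same sequence forces $H^i(A)\to H^i(V)$ to be bijective for $i\le 0$. That gives left exactness, by testing against representables (using that $H^0$ is fully faithful on quasi-representables, with the negative degrees handled by shifting the test).

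The main obstacle is bookkeeping the shifts and signs so that the cone of $A\to V$ is genuinely $\Sigma^{-1}N$, and making sure that "homotopy left exact" matches "isomorphism on $H^{\le 0}$" rather than a full quasi-isomorphism. I would settle the first point by reading off the octahedron in \eqref{TR4}, and the second by unwinding the definition of homotopy kernel for connective $\A$.
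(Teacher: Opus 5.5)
Your argument is correct. The paper gives no proof of this lemma and only quotes it from \cite{Chen24b}, so there is nothing there to compare against; your route is the natural one. The last column of ($\bigstar$) gives $N\cong\Sigma\Cone(A^\wedge\to V)$. The long exact cohomology sequence of $A^\wedge\to V\to\Sigma^{-1}N\to\Sigma A^\wedge$, together with $H^{>0}(A^\wedge)=H^{>0}(B^\wedge)=H^{>0}(C^\wedge)=0$, then shows two things: $H^i(A^\wedge)\to H^i(V)$ is bijective for all $i\le 0$ if and only if $H^{j}(N)=0$ for all $j\le -1$, and $H^0(N)\cong H^1(V)\cong\mathrm{coker}\,H^0(j)$. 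Equivalently, $N$ always lies in $\D(\A)^{\le 0}$, being an extension of $C$ and $\Sigma U$, so the whole content is that left exactness means $N\in\D(\A)^{\ge 0}$.

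Only the write-up of the definition needs fixing. Homotopy left exactness means from the outset that $\tau_{\le 0}A^\wedge\to\tau_{\le 0}V$ is a quasi-isomorphism, i.e.\ $H^i\Hom_{\A}(A',A)\to H^i(V(A'))$ is bijective for every $A'$ and every $i\le 0$. This truncation is part of the definition, not a consequence of connectivity. It is also exactly how the paper uses the notion in the proof of Theorem~\ref{thm:dgquotient}. So delete the sentence claiming that left exactness means $A^\wedge\to V$ is an isomorphism in $\D(\A)$, rather than retracting it mid-argument; that reading would force $N=0$ and contradict the lemma. The closing appeal to full faithfulness of $H^0$ is superfluous, since $H^i(M)(A')=H^i(M(A'))$ by definition.
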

The class of 3-term h-complexes that we are considering are the class of homotopy left exact sequences. Moreover, since $\A$ is left stable, each morphism $j:B\rightarrow C$ admits a homotopy kernel. It follows that totalizations of homotopy left exact sequences are exactly finitely presented right $H^0(\A)$-modules.

By Proposition~\ref{thm:main}, the category $H^0(\A)$ carries a canonical left triangulated structure. In particular, it admits weak kernels. So, the category $\mod H^0(\A)$ of finitely presented right $H^0(\A)$-modules is {\em wide} in $\Mod H^0(\A)$, i.e.~it is stable under extensions, kernels and cokernels.

\begin{notation}\label{not:N}
We denote by $\tilde{\mathcal N}\subseteq \mathcal H$ the essential image of $\mathcal N$ under the homological functor $H^0:\D(\A)\rightarrow \mathcal H$.
\end{notation}
\begin{corollary}\label{cor:N}
An object in $\mathcal H\simeq \Mod H^0(\A)$ lies in $\tilde{\mathcal N}$ if and only if it is finitely presented.
The t-structure $(\D(\A)^{\leq 0}, \D(\A)^{\geq 0})$ on $\D(\A)$ restricts to a bounded t-structure on $\mathcal N$.
\end{corollary}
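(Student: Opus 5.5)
We prove the two assertions of Corollary~\ref{cor:N} in turn, both resting on the lemma from \cite[Lemma 3.6]{Chen24b} and on the wideness of $\mod H^0(\A)$ in $\Mod H^0(\A)$.

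\textbf{Step 1: the generators have finitely presented modules in degree $0$.} By the lemma, the totalization $N$ of a homotopy left exact sequence lies in $\mathcal H$. It is the cokernel of $H^0(B)\to H^0(C)$, a map between representable projectives in $\Mod H^0(\A)$, so $N$ is finitely presented. Conversely, let $M=\mathrm{coker}(H^0(j))$ be finitely presented. Since $\A$ is left stable, $j$ lies in a homotopy left exact sequence, and $M$ is its totalization. Hence the finitely presented modules are exactly the $H^0$ of the generators of $\mathcal N$, and they already lie in $\mathcal H$.

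\textbf{Step 2: every object of $\mathcal N$ has bounded, finitely presented cohomology.} Consider the full subcategory $\mathcal N'\subseteq\D(\A)$ of objects $X$ such that $H^i(X)$ is finitely presented for all $i$ and vanishes for $|i|\gg0$. Here $H^i$ denotes the $i$-th cohomology with respect to the t-structure; it is $H^0(\Sigma^i X)$ computed pointwise.

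1. $\mathcal N'$ is closed under shifts.
2. $\mathcal N'$ is closed under direct summands, since summands of finitely presented modules are finitely presented.
3. $\mathcal N'$ is closed under cones. Given a triangle $X\to Y\to Z\to\Sigma X$ with $X,Y\in\mathcal N'$, the long exact cohomology sequence gives an exact sequence
\[
H^i(X)\to H^i(Y)\to H^i(Z)\to H^{i+1}(X)\to H^{i+1}(Y).
\]
Thus $H^i(Z)$ is an extension of a kernel by a cokernel of maps between finitely presented modules. Since $\mod H^0(\A)$ is wide, $H^i(Z)$ is finitely presented. Boundedness is clear.

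So $\mathcal N'$ is a thick triangulated subcategory. By Step 1 it contains the generators of $\mathcal N$, hence $\mathcal N\subseteq\mathcal N'$.

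In particular $H^0(\mathcal N)$ consists of finitely presented modules. Together with Step 1 this gives $\tilde{\mathcal N}=\mod H^0(\A)$, which is the first assertion.

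\textbf{Step 3: the t-structure restricts to a bounded t-structure on $\mathcal N$.} The key point is to show that $\mathcal N$ is closed under the truncation functors $\tau_{\le n}$ and $\tau_{\ge n}$. We argue by induction on the cohomological amplitude of $X\in\mathcal N$, which is finite by Step 2.

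1. If $X$ has a single nonzero cohomology $H^n(X)=M$, then $X\simeq\Sigma^{-n}M$ with $M\in\mathcal H$ finitely presented. By Step 1, $M$ is (isomorphic to) a generator of $\mathcal N$, so each truncation of $X$ is $0$ or $X$ itself and lies in $\mathcal N$.
2. In general, let $n$ be the top degree with $H^n(X)\ne0$. Then $\tau_{\ge n}X\simeq\Sigma^{-n}H^n(X)$, which lies in $\mathcal N$ by Step 1.
3. Consequently $\tau_{\le n-1}X=\mathrm{cocone}(X\to\tau_{\ge n}X)$ lies in $\mathcal N$ and has smaller amplitude.
4. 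By induction all truncations of $\tau_{\le n-1}X$ lie in $\mathcal N$. The truncations of $X$ are then obtained as cocones and extensions of objects in $\mathcal N$, so they lie in $\mathcal N$ as well.

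This shows that $(\mathcal N\cap\D(\A)^{\le0},\mathcal N\cap\D(\A)^{\ge0})$ is a t-structure on $\mathcal N$. It is bounded by the boundedness established in Step 2.

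The main point needing care is Step 2, specifically that cones keep cohomology finitely presented. This is exactly where the wideness of $\mod H^0(\A)$, coming from the weak kernels in $H^0(\A)$, is used.
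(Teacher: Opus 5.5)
Your proof is correct and follows the paper's argument. Your Step 2 is exactly the paper's proof: the subcategory with bounded, finitely presented cohomology is triangulated by wideness of $\mod H^0(\A)$ and contains the generators, and your Step 1 matches the remark made just before the corollary. Your Step 3 shows that $\mathcal N$ is closed under truncations because shifted finitely presented modules already lie in $\mathcal N$; this makes explicit the restriction of the t-structure, which the paper leaves implicit.
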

\begin{proof}
Let $\mathcal N'$ be the full subcategory of $\mathcal N$ consisting of objects $N$ such
that $H^i(N)$ is finitely presented for each $i \in \mathbb Z$ and only finitely many of them are nonzero. Since $\mod H^0(\A)$ is a wide subcategory of $\Mod H^0(\A)$ , $\mathcal N'$ is a triangulated subcategory of $\mathcal N$. It certainly contains totalizations of
homotopy left exact sequences and thus is equal to $\mathcal N$. Therefore the objects $N$ in $\mathcal N$ have the property that
$H^i(N)$ is finitely presented for each $i \in \mathbb Z$ and only finitely many of them are nonzero.
\end{proof}
\begin{proof}[Proof of Theorem~\ref{thm:universal}]

The functor $H^0(F):H^0(\A)\rightarrow \D^b(\A)$ is a triangle functor from the left triangulated category $H^0(\A)$ to the triangulated category $\D^b(\A)$. By the universal property of the stablization, there is a triangle functor $G$, unique up to isomorphism, from $\S(H^0(\A))$ to $\D^b(\A)$ which makes the following diagram commute
\[
\begin{tikzcd}
H^0(\A)\ar[d]\ar[r,"H^0(F)"]&\D^b(\A)\\
\S(H^0(\A)).\ar[ru,dashed,"G"{swap}]&
\end{tikzcd}
\]
Let us first show that the triangle functor $G$ sends nonzero objects in $\S(H^0(\A))$ to nonzero objects in $\D^b(\A)$.
The objects in $\S(H^0(\A))$ are shifts of the objects in $H^0(\A)$.
Therefore, we only need to show that if the functor $G$ sends an object from $H^0(\A)$ to a zero object in $\D^b(\A)$, then it is zero in $\S(H^0(\A))$.
Note that by Corollary~\ref{cor:N}, the triangulated subcategory $\N$ of $\tr(\A)$ admits a bounded t-structure. Thus, it is Karoubian, i.e.~idempotent-split, by~\cite{ChenLe07}.
Let $A$ be an object in $H^0(\A)$.
Suppose that it is sent to a zero object in $\D^b(\A)$ under the functor $H^0(F)$.
Then $A$ lies in $\N$ when viewed as an object in $\tr(\A)$.
It follows that $\Hom_{\tr(\A)}(A',\Sigma^{-n}A)=0$ for any object $A'$ in $\A$ and $n>>0$.
We have $\Hom_{\tr(\A)}(A',\Sigma^{-n}A)\iso \Hom_{\tr(\A)}(A',\Omega^n (A))$.
Therefore $\Omega^{n} (A)$ is a zero object in $H^0(\A)$ and hence $A$ is zero in $\S(H^0(\A))$.

Next, we show that the functor $G$ is full.
Let $A$ and $B$ be two objects in $H^0(\A)$.
Suppose we are given a morphism $b/s$ in $\D^b(\A)=\tr(\A)/\mathcal N$
\[
\begin{tikzcd}
&W\ar[ld, Rightarrow,"s"{swap}]\ar[rd,"b"]&\\
A&&B,
\end{tikzcd}
\]
where $M\coloneqq \Cone(s)\in \N$.
Since $\Hom_{\tr(\A)}(\Sigma^{-1}A, \N^{\geq 2})=0$, we can assume that $M\in \N^{\leq 0}$.
Suppose that $M\in \N^{[-n,0]}$ for some $n\geq 0$.
We have the following in $\tr(\A)$
\[
\begin{tikzcd}
\Sigma^{-2}M\ar[d,equal]\ar[r,dashed]&W_1'\ar[d,dashed]\ar[r,dashed,"s_{1}'"]&\Omega(A)\ar[d]\ar[r]&\Sigma^{-1}M\ar[d,equal]\\
\Sigma^{-2}M\ar[r]&\Sigma^{-1}W\ar[r,"\Sigma^{-1}(s)"]\ar[d,dashed]&\Sigma^{-1}A\ar[r]\ar[d]&\Sigma^{-1}M\\
&\Sigma^{-1}N'\ar[r,equal]&\Sigma^{-1}N'.&
\end{tikzcd}
\]
So we obtain the following morphism in $\D^b(\A)$
\[
\begin{tikzcd}
&W_1'\ar[ld,Rightarrow,"s_1'"{swap}]\ar[rd]&\\
\Omega(A)&&\Sigma^{-1}B.
\end{tikzcd}
\]
As above, we can assume that $\Cone(s')\in \N^{[-n+1,0]}$.
After composing with the morphism $\Omega(B)\rightarrow \Sigma^{-1}B$, we obtain a morphism in $\D^b(\A)$
\[
\begin{tikzcd}
&W_2\ar[ld,Rightarrow,"s_2"{swap}]\ar[rd]&\\
\Omega(A)&&\Omega(B).
\end{tikzcd}
\]
Similarly as above, we can assume that $\Cone(s_2)\in \N^{[-n+1,0]}$.
We iterate the above procedure and 
we see that the corresponding morphism in $\D^b(\A)$
\[
\begin{tikzcd}
&W_{n+1}\ar[ld,Rightarrow,"s_{n+1}"{swap}]\ar[rd]&\\
\Omega^n(A)&&\Omega^n(B)
\end{tikzcd}
\]
has the property that $\Cone(s_{n+1})$ is in the heart of the t-structure on $\N$.
We have $\Hom_{\tr(\A)}(\Omega^{n+1}(A), \Sigma^{-1}N)=0$ for any $N$ in the heart of the t-structure on $\N$. 
Therefore, the map $s_{n+1}':W_{n+1}'\rightarrow \Omega^{n+1}(A)$ is a split epimorphism. 
It is now straightforward to see that  the morphism $b/s$ in $\D^b(\A)$ is from a morphism $\Omega^{n+1}(\A)\rightarrow \Omega^{n+1}(B)$ under the functor $G$. 
Therefore, the functor $G$ is full. 

As a result, the triangle functor $G$ is fully faithful and hence an equivalence of triangulated categories.
\end{proof}
\begin{example}
Let $\A$ be an additive category with kernels, regarded as a dg category concentrated in degree zero. 
In this setting, $\A$ is a left stable dg category, where the loop functor $\Omega$ coincides with the zero functor.
Consequently, the bounded derived category $\D^b(\A)$ vanishes, and so does the stablization of $\A$. 
\end{example}
\begin{example}
Let $\T_{\dg}$ be a pretriangulated dg category and $\A'\subset \T_{\dg}$ a full dg subcategory such that $H^0(\A)$ is an aisle in $H^0(\T_{\dg})$.
Put $\A=\tau_{\leq0}\A'$.
Then $\A$ is a connective dg category with homotopy kernels.
For any morphism $f:Y\rightarrow Z$ in $Z^0(\A)$, consider its homotopy kernel in $\T_{\dg}$ 
\[
\begin{tikzcd}
X\ar[r]\ar[rr,bend right=8ex]&Y\ar[r,"f"]&Z.
\end{tikzcd}
\]
The precomposition with the canonical map $\tau_{\leq 0}X\rightarrow X$ then induces the homotopy kernel of $f$ in $\A$
\[
\begin{tikzcd}
\tau_{\leq 0}X\ar[r]\ar[rr,bend right=8ex]&Y\ar[r,"f"]&Z.
\end{tikzcd}
\]
It follows that for any object $Z$ in $\A$, the desuspension $\Omega(Z)$ is homotopy equivalent to $\tau_{\leq 0}\Sigma^{-1}Z$.
If the associated t-structure is bounded, then the functor $\Omega$ is nilpotent. In this case, the bounded derived category $\D^b(\A)$ vanishes, and so does the stablization of $H^0(\A)$.
\end{example}
\section{Exact dg categories with enough projectives}
With the notion of left stable dg categories at hand, we could state the results in \cite[6.37]{Chen23} more precisely.
Let $(\B,\mathcal S)$ be a connective exact dg cateogry with enough projectives. We denote by $\P$ the full dg subcategory of $\B$ consisting of the projectives. Our main aim in this subsection is to prove the following.
\begin{theorem}\label{thm:dgquotient}
The dg quotient $\B/\P$ is left stable.
\end{theorem}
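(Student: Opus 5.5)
We must show that every morphism $j:B\rightarrow C$ in $Z^0(\B/\P)$ admits a homotopy kernel in $\B/\P$. The plan is to reduce to conflations in $\B$, where homotopy kernels are already available, and then check that the dg quotient by $\P$ turns these into homotopy kernels.

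The first step is to represent $j$ by a deflation. A closed degree-zero morphism in $\B/\P$ lifts, up to homotopy, to a morphism in $\B$ only after adding a projective summand. Concretely, I would first treat morphisms coming from $Z^0(\B)$. Since $H^0(\B)\rightarrow H^0(\B/\P)$ is full (morphisms in $H^0(\B/\P)$ are the ideal quotient $H^0(\B)/[\P]$), any $j$ in $Z^0(\B/\P)$ is homotopic to the image of some $j_0:B\rightarrow C$ in $Z^0(\B)$. Homotopy kernels are invariant under homotopy of the morphism, so we may assume $j=j_0$. Since $\B$ has enough projectives, choose a deflation $p:P\rightarrow C$ with $P\in\P$. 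Then $[j_0,\,p]:B\oplus P\rightarrow C$ is a deflation: deflations are stable under this operation by the exact dg axioms, via a homotopy pullback argument. In $\B/\P$ the inclusion $B\rightarrow B\oplus P$ is a homotopy equivalence, so we may replace $j_0$ by the deflation $j'=[j_0,p]$.

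The second step is to take the conflation $A\xrightarrow{f}B\oplus P\xrightarrow{j'}C$ in $\mathcal S$, with homotopy $h$. This is a homotopy short exact sequence, in particular homotopy left exact in $\B$. We must show that its image in $\B/\P$ is homotopy left exact. That is, for every object $X$ we need the induced map from $\Hom_{\B/\P}(X,A)$ to the totalized homotopy kernel of $\Hom_{\B/\P}(X,B\oplus P)\rightarrow\Hom_{\B/\P}(X,C)$ to be a quasi-isomorphism.

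The third step, which is the main obstacle, is the comparison of morphism complexes in the Drinfeld quotient. I would use the description of $\B/\P$ via the exact/derived picture. Since $\B$ is connective with enough projectives, $\Hom_{\B/\P}(X,Y)$ can be computed, as in \cite[6.37]{Chen23}, as the cone of $\Hom_{\B}(X,Y)$ receiving the map from morphisms that factor through the projective resolution of $X$. In other words, $\Hom_{\B/\P}(X,-)$ is the cofiber of $\Hom_{\D^b_{\dg}(\B)}(\P\text{-part},-)\rightarrow\Hom_{\B}(X,-)$, and each ingredient is an exact functor on conflations ending in a deflation. The key point is that $\Hom(X,-)$ and $\Hom(P',-)$ for projective $P'$ both send the conflation to homotopy fiber sequences. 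For projectives this holds because $\Hom(P',-)$ is exact on conflations: a deflation induces a surjection on $H^0$, and in the connective setting it gives a fiber sequence. Cofibers of fiber sequences of complexes are fiber sequences, so $\Hom_{\B/\P}(X,-)$ sends the conflation to a homotopy fiber sequence, which yields left exactness. Should this description of Hom in the quotient be unavailable in usable form, I would instead argue in $\D^b(\B)/\tr(\P)$ using $\D^b_{\dg}(\B/\P)\simeq\D^b_{\dg}(\B)/\pretr(\P)$. There, the conflation becomes a triangle whose terms lie in the image of $\B$, and left exactness reduces to showing that $\Hom(X,\Sigma^{-n}C)$ for $n\geq1$ is computed within $\B/\P$ by $\Omega^n C$. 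This last fact is obtained from syzygy conflations $\Omega C\rightarrow P\rightarrow C$.

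Finally, general $X$ and $j$ are handled by the reductions in the first step, which completes the argument.
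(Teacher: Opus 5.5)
Your reduction to a deflation $[j_0,p]\colon B\oplus P\to C$ is the same as the paper's. You have also found the right key input: for $P'\in\P$, $\Hom_{\B}(P',-)$ turns a conflation into an honest fiber sequence, i.e.\ the totalization $N$ satisfies $\Hom_{\tr(\B)}(P',\Sigma^iN)=0$ for all $i$. The gap is in how step 3 uses this.

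It is not true that $\Hom_{\B}(X,-)$ sends the conflation to a homotopy fiber sequence. The cone of the comparison map $\Hom_{\B}(X,A)\to\mathrm{fib}\bigl(\Hom_{\B}(X,B\oplus P)\to\Hom_{\B}(X,C)\bigr)$ is $\Hom(X,\Sigma^{-1}N)$. Its $H^1$ is $\mathrm{coker}\bigl(H^0\Hom_{\B}(X,B\oplus P)\to H^0\Hom_{\B}(X,C)\bigr)$, which is nonzero in general. Your conclusion also fails in $\B/\P$. Take $\Lambda=k[x]/(x^2)$, the conflation $k\to\Lambda\to k$ and $X=k$. This cokernel is $k$, and it survives in the quotient because $N\in\tr(\P)^{\perp}$. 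So the image in $\B/\P$ is left exact but not a fiber sequence.

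If you instead read ``fiber sequence'' as ``left exact'', the principle you invoke breaks. Suppose the comparison cones $Q_1,Q_2$ of two sequences are concentrated in degrees $\geq 1$. Then the cone of $Q_1\to Q_2$ is only concentrated in degrees $\geq 0$, with $H^0=\ker(H^1Q_1\to H^1Q_2)$. So a cofiber of two merely left exact sequences need not be left exact.

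The argument is repaired by exploiting the asymmetry between the two terms. By Drinfeld's bar description,
$\Hom_{\B/\P}(X,Y)\simeq\Cone\bigl(\Hom_{\B}(-,Y)\otimes^{\mathbb L}_{\P}\Hom_{\B}(X,-)\to\Hom_{\B}(X,Y)\bigr)$.
The $\P$-term is assembled from the $\Hom_{\B}(P',Y)$, not from a projective resolution of $X$. Hence it sends the conflation to a genuine fiber sequence, and its comparison cone vanishes. The comparison cone for $\Hom_{\B/\P}(X,-)$ is therefore still $\Hom_{\B}(X,\Sigma^{-1}N)$, which has no cohomology in degrees $\leq 0$. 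That is exactly homotopy left exactness.

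This is the paper's argument in other clothing. The paper notes $N\in\tr(\P)^{\perp}$, so $\Hom_{\tr(\B)/\tr(\P)}(A',\Sigma^iN)\cong\Hom_{\tr(\B)}(A',\Sigma^iN)=0$ for $i\leq -1$. It then reads off left exactness from the triangle $A\to V\to\Sigma^{-1}N\to\Sigma A$, with no explicit model of the quotient's Hom complexes needed.

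You should drop your fallback, for two reasons. First, $\D^b_{\dg}(\B/\P)$ and the equivalence $\D^b_{\dg}(\B/\P)\simeq\D^b_{\dg}(\B)/\pretr(\P)$ are only available once $\B/\P$ is known to be left stable, so using them is circular. Second, Hom complexes in $\B/\P$ compute $\tr(\B)/\tr(\P)$, not $\D^b(\B)/\tr(\P)$. In the latter the conflation becomes a triangle, which is precisely the misleading fiber-sequence picture.
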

\begin{proof}
We first show that the dg quotient functor $\pi:\B\rightarrow\B/\P$ sends conflations to homotopy left exact sequences.
Let $X$ be a conflation in $\B$
\[
\begin{tikzcd}
A\ar[r,"f"]\ar[rr,bend right=8ex,"h"swap]&B\ar[r,"j"]&C.
\end{tikzcd}
\]
We denote by $N$ its totalization. Put $V=\Sigma^{-1}\Cone(j)$.
We have $\Hom_{\tr(\B)}(\P, \Sigma^{i}N)=0$ for $i\in \mathbb Z$.
Then for $A'\in \B$ and $i\in \mathbb Z$, we have $\Hom_{\tr(\B)}(A',\Sigma^i N)\iso \Hom_{\tr(\B)/\tr(\P)}(A',\Sigma^i N)$.
Since $X$ is a conflation in $\B$, we have $\Hom_{\tr(\B)}(A',\Sigma^i N)=0$ for $A'\in \B$ and $i\leq -1$.
Hence we have $\Hom_{\tr(\B)/\tr(\P)}(A',\Sigma^i N)=0$ for $A'\in \B$ and $i\leq -1$.
So we have $\Hom_{\tr(\B)/\tr(\P)}(A',\Sigma^{i}A)\iso \Hom_{\tr(\B)/\tr(\P)}(A',\Sigma^{i}V)$ for $A'\in \B$ and $i\leq 0$. It follows that the image of $X$ in $\B/\P$ is homotopy left exact.

We have $H^0(\B/\P)=H^0(\B)/[\P]$. Let $g:B\rightarrow C$ be a morphism in $Z^0(\B/\P)$. Then the homotopy equivalence class $\overline{g}$ of $g$ is a morphism in $H^0(\B)/[\P]$ represented by a morphism $\overline{h}$ in $H^0(\B)$ with $h$ a morphism in $Z^0(\B)$.
So $\overline{\pi(h)}$ is isomorphic to $\overline{g}$ as objects in the morphism category $\Mor(H^0(\B/\P))$. So $g$ is isomorphic to $\pi(h)$ in $H^0\Mor(\B/\P)$.
Hence, we may assume that $g$ is given by a morphism in $Z^0(\B)$.
Since $\B$ has enough projectives, we have the following diagram in $\B$
\[
\begin{tikzcd}
E\ar[r,tail]\ar[d,equal]&D\ar[r,dashed]\ar[d,dashed]&B\ar[d,"g"]\\
E\ar[r,tail,"r"swap]&P\ar[r, two heads,"s"swap]&C.
\end{tikzcd}
\]
We obtain a conflation 
\[
\begin{tikzcd}
D\ar[r,tail]&B\oplus P\ar[r,two heads]&C.
\end{tikzcd}
\]
Since $B\oplus P\rightarrow C$ is isomorphic to $g:B\rightarrow C$ in $H^0(\Mor(\B/\P))$, we deduce that the morphism $g:B\rightarrow C$ admits a homotopy kernel in $\B/\P$. This finishes the proof that $\B/\P$ admits homotopy kernels and is left stable.
\end{proof}
\begin{remark}
Let $\B$ be a connective exact dg category with enough projectives. For example, we could simply take $\B$ be the module category $\mod\mbox{-}\Lambda$ of finitely generated modules over an artin algebra $\Lambda$. In summary of the above results, we have the following diagram
\[
\begin{tikzcd}
\B\ar[r]\ar[d,rightsquigarrow]&\B/\P\ar[r]\ar[d,rightsquigarrow]&\D^b_{\dg}(\B/\P)\ar[d,rightsquigarrow]& \D^b_{\dg}(\B)/\pretr(\P)\ar[d,rightsquigarrow]\ar[l,"\sim"{swap}] \\
H^0(\B)\ar[r]&H^0(\B)/[\P]\ar[r]&\S(H^0(\B)/[\P]) & \D^b(\B)/\tr(\P)\ar[l,"\sim"{swap}],
\end{tikzcd}
\]
where the arrows $\leadsto$ means that the upper categories provide dg enhancements for the lower categories.
\end{remark}

\section*{Acknowledgements}
The author thanks Xiao-Wu Chen for helpful comments and discussions.

	\def\cprime{$'$} \def\cprime{$'$}
	\providecommand{\bysame}{\leavevmode\hbox to3em{\hrulefill}\thinspace}
	\providecommand{\MR}{\relax\ifhmode\unskip\space\fi MR }
	\providecommand{\MRhref}[2]{%
		\href{http://www.ams.org/mathscinet-getitem?mr=#1}{#2}
	}
	\providecommand{\href}[2]{#2}
%


	\bibliographystyle{amsplain}
	\bibliography{stanKeller}

@article {Chen26,
    AUTHOR = {Chen, Xiaofa},
     TITLE = {Exact dg categories {I}: {F}oundations},
   JOURNAL = {Adv. Math.},
  FJOURNAL = {Advances in Mathematics},
    VOLUME = {489},
      YEAR = {2026},
     PAGES = {Paper No. 110809},
      ISSN = {0001-8708,1090-2082},
   MRCLASS = {18G35 (16E45 16G10 18G80 18N40)},
       DOI = {10.1016/j.aim.2026.110809},
       URL = {https://doi.org/10.1016/j.aim.2026.110809},
}

@unpublished{Chen24b,
author={Chen, Xiaofa},
year={2024},
title={Exact dg categories {II} : {T}he embedding theorem},
note={arXiv:2406.11226 [math.RT]}
}

@unpublished{Chen23,
Year={2023},
Author={Chen, Xiaofa},
Title={On exact dg categories},
Note={arXiv:2306.08231 [math.RT]}
}

@unpublished{Mochizuki25a,
year={2025},
author={Mochizuki, Nao},
title={Higher-dimensional generalization of abelian categories via {DG}-categories},
note={arxiv:2501.06955 [math.CT]}
}

@article {ChenLe07,
    AUTHOR = {Le, Jue and Chen, Xiao-Wu},
     TITLE = {Karoubianness of a triangulated category},
   JOURNAL = {J. Algebra},
  FJOURNAL = {Journal of Algebra},
    VOLUME = {310},
      YEAR = {2007},
    NUMBER = {1},
     PAGES = {452--457},
      ISSN = {0021-8693,1090-266X},
   MRCLASS = {18E30},
  MRNUMBER = {2307804},
       DOI = {10.1016/j.jalgebra.2006.11.027},
       URL = {https://doi.org/10.1016/j.jalgebra.2006.11.027},
}

@preamble{"\def\cprime{$'$} "}

@article{Kelly65,
    Author = {Kelly, G. M.},
    Journal = {Proc. Cambridge Philos. Soc.},
    Pages = {847--854},
    Title = {Chain maps inducing zero homology maps},
    Volume = {61},
    Year = {1965}}

@article {Tabuada10,
    AUTHOR = {Tabuada, Gon\c{c}alo},
     TITLE = {On {D}rinfeld's dg quotient},
   JOURNAL = {J. Algebra},
  FJOURNAL = {Journal of Algebra},
    VOLUME = {323},
      YEAR = {2010},
    NUMBER = {5},
     PAGES = {1226--1240},
      ISSN = {0021-8693},
   MRCLASS = {16E45 (14A22 16S38)}
  }

\end{document}